\documentclass{article}
\usepackage[fontsize=12pt]{scrextend}
\usepackage{babel}
\usepackage{amsmath}
\usepackage{mathtools}
\usepackage{euscript, amsmath,amssymb,amsfonts,mathrsfs,amsthm,mathtools,graphicx, tikz, xcolor,verbatim, bm, enumerate, enumitem,multicol,appendix,etoolbox}
\usepackage{wrapfig}
\usepackage[all]{xy}
\usepackage{upquote}
\usepackage{listings}
\usetikzlibrary{arrows,patterns}
\usepackage{authblk}
\usepackage{caption}
\allowdisplaybreaks

\usepackage[latin1]{inputenc}
\usepackage{verbatim}
\usepackage{bm}
\usepackage[justification=centering]{subcaption}

\lstdefinelanguage{Sage}[]{Python}
{morekeywords={True,False,sage,singular},
sensitive=true}
\definecolor{dblackcolor}{rgb}{0.0,0.0,0.0}
\definecolor{dbluecolor}{rgb}{.01,.02,0.7}
\definecolor{dredcolor}{rgb}{0.8,0,0}
\definecolor{dgraycolor}{rgb}{0.30, 0.3,0.30}

\usepackage[outer=1in,marginparwidth=.75in]{geometry}
\usepackage{marginnote}
\usetikzlibrary{calc}
\usetikzlibrary{positioning}
\usetikzlibrary{shapes.geometric}
\usetikzlibrary{shapes.geometric}

\usepackage{color}
\usepackage[latin1]{inputenc}
\usepackage{wrapfig}

\tikzstyle{square} = [shape=regular polygon, regular polygon sides=4, minimum size=1cm, draw, inner sep=0, anchor=south, fill=gray!30]
\tikzstyle{squared} = [shape=regular polygon, regular polygon sides=4, minimum size=1cm, draw, inner sep=0, anchor=south, fill=gray!60]

\newtheorem{theorem}{Theorem}[section]
\newtheorem{definition}[theorem]{Definition}

\newtheorem{lemma}[theorem]{Lemma}
\newtheorem{coro}[theorem]{Corollary}

\newcommand{\Z}{{\mathbb{Z}}}

\newcommand{\Nm}{{\mathrm{Nm}}}

\providecommand{\keywords}[1]
{
  \small	
  \textbf{\textit{Keywords---}} #1
}

\begin{document}

\title{Sizes of Pre-Images of the Minimal Euclidean Function on the Gaussian Integers}
\author{Hester Graves}
\affil{Center for Computing Sciences/IDA}
\date{\today}

\maketitle

\abstract{
In 2023, the author presented the first computable minimal Euclidean function for a non-trivial number field.
Along with a formula for $\phi_{\Z[i]}$, the minimal Euclidean function on the Gaussian inteers, the same paper 
introduced a geometric description for $\phi_{\Z[i]}^{-1}([0,n])$.
This paper uses that construction to prove formulas for the size of the function's pre-images, or $|\phi_{\Z[i]}^{-1}([0,n])|$.

\keywords{number theory, Euclidean algorithm, Euclidean function, Euclidean domain, Gaussian integers, quadratic number fields}

\section{Introduction}

Motzkin showed every Euclidean domain $R$ has a minimal Euclidean function, $\phi_R$ \cite{Motzkin}.
Gauss demonstrated the absolute norm is a Euclidean function on the domain $\Z[i] = \{x +yi :a, b \in \Z\}$, the so-called Gaussian integers.
In other words, for all non-zero $a,b \in \Z[i]$, there exist some quotient and remainder $q$ and $ r \in \Z[i]$ such that $a = qb +r$, where either 
$r =0$  or $|\Nm(r)| < |\Nm(b)|$.
The absolute norm is a Euclidean function on $\Z[i]$, but it is not the minimal Euclidean function, $\phi_{\Z[i]}$.

Since $2 = -i(1+i)^2$ and all integers have binary expansions, every Gaussian integer $x +yi$ has an $(1+i)$-ary expansion, $x +yi = \sum_{j=0}^N u_j (1+i)^j$, 
where the $u_j$ are multiplicative units, i.e., the $u_j \in \Z[i]^{\times} = \{ \pm 1, \pm i\}$.
The easiest way to find a $(1+i)$-ary expansion for $x +yi = (x-y) +y(1+i)$ is to take the binary expansions of $(x -y)$ and $y$ and rewrite them as sums of units times  powers of $1+i$.
 H W Lenstra Jr showed in Section 11 of \cite{Lenstra} that 
$$\phi_{\Z[i]}(x +yi) = \min \left \{n : \exists u_j \in \Z[i]^{\times} \text{ such that } \sum_{j=0}^n u_j (1+i)^j = x +yi \right \}.$$

Samuel described the pre-images $\phi_{\Z[i]}^{-1}([0,n])$ as `very irregular' (p 290, \cite{Samuel}), but in his dissertation, Simon Plouffe \cite{Plouffe} conjectured a formula for $a(n+1) = |\phi_{\Z[i]}^{-1} ([0,n])|$.
Robert Israel computed a large table of values of $a(n)$, and found formulas for $a(n)$, both published in the OEIS' entry for the sequence \cite{oeis}.
He claimed
\begin{align*}
a(2k) &= 14\cdot 4^k - 34\cdot 2^k + 8\cdot k + 21\\
a(2k+1)&= 28\cdot 4^k - 48\cdot 2^k + 8\cdot k + 25.
\end{align*}
Interestingly, the entry also links to a letter from Lenstra to N J A Sloane, where he also computed early entries in this sequence.

In \cite{Graves}, the author found a formula to explicitly compute $\phi_{\Z[i]}$, as well as a new geometric description of the set $\phi_{\Z[i]}^{-1}([0,n])$.
This work applies that geometric description to give straightforward proofs of Israel's formulas.
The author knows of no other published proof of Israel's work.

\section{Background}

\begin{minipage}[t]{0.5\textwidth}
\vspace{0pt}
The author previously showed that $\phi_{\Z[i]}^{-1}([0,n])$ forms a perforated octagon in the plane.  
Octagons are a special case of the following definition.

\begin{definition} Suppose $a$ and $b$ are integers, where $0 < a < b <2a$.  
We define 
$$E_{a,b} := \{x +yi: |x|, |y| \leq a, |x| + |y| \leq b\}.$$
\end{definition}

All of the geometric objects studied are defined in terms of the following (increasing) 
sequence.
\end{minipage}
\hfill 
\begin{minipage}[t]{0.40\textwidth}
\vspace{0pt}
\centering
\includegraphics{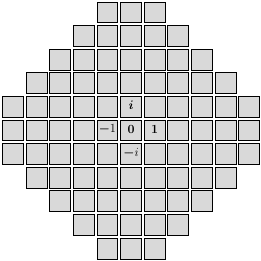}
\captionof{figure}{$E_{5,6}$}
\end{minipage}

\begin{definition} For $n \geq 0$, let 
$$w_n := \begin{cases}
3 \cdot 2^k \text{ if }n =2k.\\
4 \cdot 2^k \text{ if } n = 2k +1
\end{cases}
.$$
\end{definition}
\textbf{Example}: The sequence begins $3, 4,6,8,12,16,24,32,48,64,\ldots.$\\

If $a$ divides $b$, we write $a | b$.  Otherwise, we write $a \nmid b$.  
We denote the greatest common divisor (or highest common factor) of $a$ and $b$ by $\gcd(a,b)$.
\begin{definition}
For $n \geq 0$, let $Oct_n = E_{w_n -2, w_{n+1} -3}$ and let 
$$S_n := \{ x +yi \in Oct_n: 2 \nmid \gcd(x,y)\}.$$
\end{definition}

\begin{figure}[ht]
\centering
\includegraphics{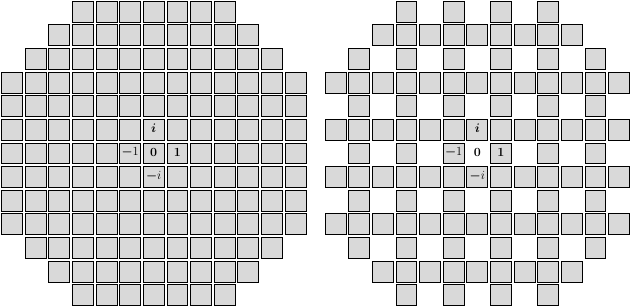}
\caption{$Oct_3$ and $S_3$}
\end{figure}

These seemingly unmotivated sets are important due to the following theorem.
We use the disjoint union symbol because the sets are, indeed, disjoint.\\
\begin{minipage}[t]{0.5\textwidth}
\vspace{0pt}
\begin{theorem} (Theorem 2.2, \cite{Graves})
For $n \geq0$, 
$$\phi_{\Z[i]}^{-1}([0,n]) \setminus \{0\}= \bigsqcup_{j=0}^{\lfloor n/2 \rfloor} 2^j S_{n-2j}.$$
\end{theorem}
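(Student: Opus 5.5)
The plan is to prove the identity by induction on $n$, using Lenstra's characterization of $\phi_{\Z[i]}$ recalled in the introduction. Write $A_n := \phi_{\Z[i]}^{-1}([0,n])\setminus\{0\}$. A nonzero $z$ lies in $A_n$ exactly when $z=\sum_{j=0}^n u_j(1+i)^j$ with every $u_j\in \Z[i]^{\times}\cup\{0\}$, possibly after trimming zeros.

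The argument rests on one structural fact about these expansions. Suppose $z$ has an expansion of length at most $n$ and is divisible by $2$. Then either $z/2$ has an expansion of length at most $n-2$, or $z$ has a better description directly.

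The first step is to split by $2$-divisibility. Every nonzero $z=x+yi$ can be written uniquely as $2^j z'$ with $2\nmid\gcd(x',y')$. Because the multipliers $2^j$ are distinct powers and the $z'$ are "primitive," the union is automatically disjoint. It then suffices to prove two claims:
\begin{enumerate}
\item[(a)] for $z'$ with $2\nmid \gcd$, we have $\phi(z')\le m$ if and only if $z'\in Oct_m$;
\item[(b)] $\phi(2z)=\phi(z)+2$ for every nonzero $z$.
\end{enumerate}
Given (a) and (b), $\phi(2^jz')\le n$ holds if and only if $\phi(z')\le n-2j$, which holds if and only if $z'\in S_{n-2j}$. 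Since $S_m$ is only defined for $m\ge 0$, this also forces $j\le\lfloor n/2\rfloor$.

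For (b), the upper bound $\phi(2z)\le\phi(z)+2$ comes from multiplying an expansion by $2=-i(1+i)^2$. For the lower bound, take an expansion of $2z$ and reduce modulo $(1+i)$ and then $(1+i)^2$. This forces $u_0=0$, or else a cancellation pattern. If $u_0\ne 0$, then divisibility by $(1+i)^2$ constrains $u_0$ and $u_1$, and we can rewrite $u_0+u_1(1+i)$ as a multiple of $(1+i)^2$ without lengthening the expansion. One way to do this is the identity $1+i\cdot(1+i)\cdot\ldots$; in practice I would check the finitely many pairs $(u_0,u_1)$ with $u_0+u_1(1+i)\equiv 0 \pmod 2$. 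After the rewrite we can divide by $(1+i)^2$ and obtain an expansion of $z$ of length two less. I expect this carry-propagation step to need care, since a rewrite can push a carry into $u_2$.

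For (a), I would induct on $m$. The base cases $m=0,1$ are a direct enumeration: $Oct_0=E_{1,1}$ and $Oct_1=E_{2,3}$.

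For the inductive step, take primitive $z$ with $\phi(z)\le m$. Then $z=u_0+(1+i)w$ with $u_0\ne 0$, because $z$ is not divisible by $(1+i)^2$ when it is primitive. Here $\phi(w)\le m-1$. Conversely, $z$ is primitive with $\phi(z)\le m$ if and only if $z-u_0$ is divisible by $(1+i)$ for some unit $u_0$ and $(z-u_0)/(1+i)$ has $\phi\le m-1$. By the induction hypothesis, combined with (b), the set of admissible $w$ is $\bigsqcup_j 2^jS_{m-1-2j}$. The geometric core is then to show that
\[
\bigcup_{u}\Bigl(u+(1+i)\bigsqcup_j 2^j S_{m-1-2j}\Bigr)\cap\{\text{primitive}\}=S_m .
\]
Multiplication by $1+i$ rotates by $45^\circ$ and scales by $\sqrt2$. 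This maps the octagon $E_{a,b}$ onto the octagon $E_{b,2a}$ with its sides interchanged, which is consistent with the recursion $w_{m+1}=2w_{m-1}$ built into the sequence $w_n$. Translating by the four units then enlarges the bounds by $1$.

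\textbf{This covering identity is the main obstacle.} It requires checking that the holes, meaning the points with even $\gcd$, in the union of the shifted rotated octagons are filled exactly on the primitive points of $Oct_m$ and nowhere outside it. I would first verify the boundary inequalities $|x|,|y|\le w_m-2$ and $|x|+|y|\le w_{m+1}-3$ from the images. After that I would handle the interior points by a parity case analysis on $(x \bmod 2, y\bmod 2)$ and choosing $u_0$ to make $w$ land in the correct layer $2^jS_{m-1-2j}$.
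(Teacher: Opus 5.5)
Your reduction of the theorem to (a) and (b) is sound, and (b) is easier than you make it. Every unit is $\equiv 1 \pmod{1+i}$, so in any expansion of a multiple of $1+i$ with digits in $D=\{0\}\cup\Z[i]^\times$ the digit $u_0$ must be $0$. Hence $\phi((1+i)w)=\phi(w)+1$ for $w\neq 0$, and no ``cancellation pattern'' or carry can occur.

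The genuine gap is in (a), and it has two parts.

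\textbf{The inductive step is misformulated.} A primitive $z$ need not have $u_0\neq 0$. Primitivity only rules out divisibility by $(1+i)^2$. By contrast, $u_0=0$ is forced exactly when $(1+i)\mid z$, i.e.\ when $x$ and $y$ are both odd, as for $z=1+i\in S_1$. Every element of $u+(1+i)\Z[i]$ with $u$ a unit has $x+y$ odd. So your displayed covering identity is false already for $m=1$: its left side misses $1+i$, and in general it misses every point of $S_m$ with both coordinates odd.

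\textbf{The key inclusion is not proved.} That identity is the whole content of the theorem, and you only describe what would need checking. Your observation about multiplication by $1+i$ gives just the easy inclusion, by induction: $\phi^{-1}([0,m])\subseteq D+(1+i)Oct_{m-1}\subseteq Oct_m$. The reverse inclusion, that every primitive point of $Oct_m$ is actually reached, is where the constants $w_m-2$ and $w_{m+1}-3$ matter. Nothing in the proposal establishes it.

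The missing step is a specific choice of digit plus a parity observation. It also shows the lower layers $2^jS_{m-1-2j}$ are never needed: $w$ can always be taken odd (not divisible by $1+i$) inside $Oct_{m-1}$. For $m\ge 1$, $A=w_m-2$ is even and $B=w_{m+1}-3=2(w_{m-1}-2)+1$ is odd.

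Case $x+y$ odd. Here exactly one coordinate of $z=x+yi\in Oct_m$ is even. Pick the unit $u$ that moves that coordinate one step toward $0$, or to $\pm1$ if it is $0$. Then $z-u=p+qi$ with $p,q$ odd, so $w=(z-u)/(1+i)=a+bi$ is odd. We have $|a|+|b|=\max(|p|,|q|)$ and $\max(|a|,|b|)=(|p|+|q|)/2$. The decrement together with parity gives $\max(|p|,|q|)\le A-1=w_m-3$ and $|p|+|q|\le B-1$, i.e.\ $w\in Oct_{m-1}$. When the even coordinate is $0$ you also need $w_m-2\le w_{m+1}-4$, which holds for $m\ge 1$.

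Case $x,y$ both odd. Then $z=(1+i)w$ with $w$ odd, and the same parity computation puts $w$ in $Oct_{m-1}$.

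With these two cases the induction on primitive points closes from the base case $Oct_0$, whose primitive points are the units. Together with (b) and the easy inclusion, this proves the statement.
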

\begin{coro} \label{coro:adding_up}For $n \geq 0$, 
$$a_{n+1}=|\phi_{\Z[i]}^{-1}([0,n]) | = 1 + \sum_{j=0}^{\lfloor n/2 \rfloor} |S_{n-2j}|.$$
\end{coro}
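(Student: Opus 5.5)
The corollary is a counting consequence of the preceding theorem (Theorem 2.2 of \cite{Graves}), and I will derive it by taking cardinalities on both sides of the decomposition.

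First I account for the point $0$. Since $\phi_{\Z[i]}(0)$ takes some value in $[0,n]$ (with the convention of \cite{Lenstra}, $0$ is the empty sum, or $\phi_{\Z[i]}(0)=0$ in Motzkin's construction), we have $0 \in \phi_{\Z[i]}^{-1}([0,n])$ for every $n \geq 0$. Therefore
$$|\phi_{\Z[i]}^{-1}([0,n])| = 1 + |\phi_{\Z[i]}^{-1}([0,n]) \setminus \{0\}|.$$

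Next I apply the theorem. It writes $\phi_{\Z[i]}^{-1}([0,n]) \setminus \{0\}$ as a disjoint union of the sets $2^j S_{n-2j}$ for $0 \leq j \leq \lfloor n/2 \rfloor$. The cardinality of a finite disjoint union is the sum of the cardinalities, so it remains only to check that $|2^j S_{n-2j}| = |S_{n-2j}|$. This holds because multiplication by $2^j$ is an injective map on $\Z[i]$, and so it is a bijection from $S_{n-2j}$ onto $2^j S_{n-2j}$. Each $S_m \subseteq Oct_m$ is finite, since $Oct_m$ is a bounded set of lattice points, so all the sums are finite.

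Combining these two steps gives $a_{n+1} = 1 + \sum_{j=0}^{\lfloor n/2\rfloor} |S_{n-2j}|$.

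The only point that needs any care is the first step, the inclusion of $0$ in the pre-image. The theorem itself only describes the nonzero elements, so the leading $1$ in the formula depends on this convention for $\phi_{\Z[i]}(0)$. Disjointness is part of the statement of the theorem, so nothing further is needed there.
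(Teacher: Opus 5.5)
Your proposal is correct and is exactly the argument the paper leaves implicit: take cardinalities in Theorem 2.2, add $1$ for the point $0$, and use that $z \mapsto 2^j z$ is a bijection from $S_{n-2j}$ onto $2^j S_{n-2j}$. You are right that the leading $1$ rests on the convention $\phi_{\Z[i]}(0)=0$, which the paper presupposes (it is what makes $a_1 = 1+|S_0| = 5$). However, reading $0$ as an ``empty sum'' in the displayed Lenstra formula would give index $-1$ rather than $0$, so the justification should be that normalization, not the empty-sum reading.
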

\end{minipage}
\hspace{1em}
\begin{minipage}[t]{0.4\textwidth}
\vspace{0pt}
\centering
\includegraphics{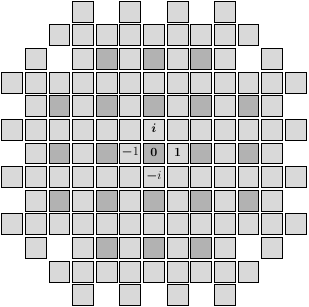}
\captionof{figure}{$\phi_{\Z[i]}^{-1}([0,3]) = S_3 \cup 2 S_1$\\
\footnotesize{
\hspace{5em}$S_3$ is in light gray\\
\hspace{5em}$2S_1$ is in darker gray}}
\end{minipage}

\section{Counting Building Blocks}

In order to find a formula for $|S_n|$, we first find a general formula for $|E_{a,b}|$.

\begin{lemma}\label{size_of_E} The number of points in $E_{a,b}$ is $1 + 4a + 4a^2 - 2(2a-b)(2a-b +1)$.
\end{lemma}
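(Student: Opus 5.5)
We count $E_{a,b}$ by starting from the full square and subtracting the four cut-off corners. Let $Q_a = \{x+yi : |x|,|y| \leq a\}$. This square has $(2a+1)^2 = 1 + 4a + 4a^2$ lattice points, which is exactly the leading part of the claimed formula. Since $E_{a,b} \subseteq Q_a$, we have $|E_{a,b}| = |Q_a| - |Q_a \setminus E_{a,b}|$, where
$$Q_a \setminus E_{a,b} = \{x+yi \in Q_a : |x|+|y| > b\}.$$
It remains to show that this complement has $2(2a-b)(2a-b+1)$ points.

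First we split the complement into four disjoint corners, one per quadrant. Because $b \geq a+1 > a$, any point with $|x|+|y| > b$ and $|x|,|y| \leq a$ has $|x| \geq b-a+1 \geq 2$ and likewise $|y| \geq 2$. So neither coordinate is zero, and the point lies in exactly one open quadrant. By the symmetries $x \mapsto -x$ and $y \mapsto -y$, the four corners have equal size, so we only need the first-quadrant corner
$$C = \{(x,y) \in \Z^2 : x,y \leq a,\ x+y > b\}.$$

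Next we count $C$ with the substitution $s = a-x$, $t = a-y$. The conditions become $s,t \geq 0$ and $s+t < 2a-b$, that is, $s+t \leq m-1$ with $m = 2a-b$. The hypothesis $b < 2a$ gives $m \geq 1$. The number of nonnegative integer pairs with $s+t \leq m-1$ is $1 + 2 + \cdots + m = m(m+1)/2$. Note that the constraints $x,y > 0$ are automatic here, since $s,t \leq m-1 < a$ because $b > a$.

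Multiplying by four gives $2m(m+1) = 2(2a-b)(2a-b+1)$ points in the complement, and subtracting from $(2a+1)^2$ yields the formula. The only step needing care is confirming that the corners neither overlap nor reach the axes. This is exactly where the hypothesis $a < b$ is used, while $b < 2a$ guarantees that the corners are nonempty triangles. Both facts were checked above, so no real obstacle remains.
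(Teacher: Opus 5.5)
Your proof is correct and is essentially the paper's argument: both start from the $(2a+1)\times(2a+1)$ square and remove four corner triangles of $T_{2a-b}=(2a-b)(2a-b+1)/2$ points each. The paper splits $E_{a,b}$ into the origin plus four rotated copies of a column and an $a\times a$ square with a triangle removed. You instead subtract the four reflected corners from the whole square, and you also explicitly verify the triangle count and that the corners avoid the axes, which the paper only asserts.
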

\begin{proof}
Figure \ref{fig:four_pieces} show the number of points in $E_{a,b}$ is 
\begin{align*}
\sum_{|x| \leq a} 
\sum_{
\tiny \begin{array}{c}|y| \leq a \\ 
|x| + |y| \leq b
\end{array}
} 1 
&= 1 + 4 \sum_{0 \leq x \leq a} 
\hspace{-.25 cm}
\sum_{
\tiny \begin{array}{c}
1\leq y \leq a\\1 \leq y \leq b-x
\end{array}} 
\hspace{-.5 cm} 1\\
&= 1 + 4a + 4 \sum_{1 \leq x \leq a}
\hspace{-.25 cm} \sum_{
\tiny
\begin{array}{c}
1 \leq y \leq a\\
1 \leq y \leq b -x.
\end{array}}
\hspace{-.5 cm} 1.\\
\intertext{The last sum represents the points in an $a \times a$ square with a right isoceles triangle removed from
the upper right corner.  
The triangle side length is $a - (b-a) =2a -b$, so the number of points in the triangle is the 
$(2a-b)$'th trianglular number, $T_{2a -b} = \frac{(2a-b)(2a-b +1)}{2}$.
We can therefore rewrite our expression as}
&= 1 + 4a + 4\left (a^2 -  \frac{(2a-b)(2a-b +1)}{2} \right )\\
&= 1 + 4a + 4a^2 - 2(2a -b)(2a -b +1). \qedhere
\end{align*}
\end{proof}

\begin{figure}[ht]
\centering
\includegraphics{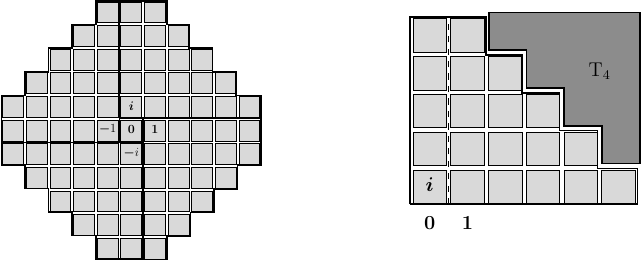}
\caption{$E_{5,6}$ divided into four pieces,
 which are unit multiples
 of each other\\
 Each piece consists of a column, as well as a $5 \times 5$ square with a right isoceles\\
  triangle of side length $4$ bitten out of the top corner}
\label{fig:four_pieces}
\end{figure}

\begin{coro}\label{oct_coro} For $n \geq 0$, 
$$|Oct_n | = 1 +4(w_n -2) + 4(w_n -2)^2 - 2(w_{n+2} - w_{n+1})(w_{n+2} - w_{n+1} -1).$$
\end{coro}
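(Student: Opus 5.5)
The corollary should follow by substituting $a = w_n - 2$ and $b = w_{n+1} - 3$ into Lemma \ref{size_of_E}, since $Oct_n = E_{w_n-2,\,w_{n+1}-3}$ by definition. The only real work is rewriting $2a-b$ in terms of the sequence $w_n$ and checking that the lemma applies.

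\textbf{The identity.} The key observation is $w_{n+2} = 2w_n$ for every $n \geq 0$. Indeed, if $n = 2k$ then $w_{n+2} = 3\cdot 2^{k+1} = 2w_n$, and if $n = 2k+1$ then $w_{n+2} = 4\cdot 2^{k+1} = 2w_n$. Hence
$$2a - b = 2w_n - 4 - w_{n+1} + 3 = w_{n+2} - w_{n+1} - 1,$$
and so $2a - b + 1 = w_{n+2} - w_{n+1}$. Then
$$(2a-b)(2a-b+1) = (w_{n+2}-w_{n+1})(w_{n+2}-w_{n+1}-1),$$
and substituting $a = w_n - 2$ into $1 + 4a + 4a^2$ gives exactly the stated formula.

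\textbf{The hypotheses.} The step needing the most care is whether Lemma \ref{size_of_E} is legitimately applicable, since $E_{a,b}$ was defined only for $0 < a < b < 2a$.
\begin{itemize}
\item $a > 0$ holds because $w_n \geq 3$.
\item $b < 2a$ is equivalent to $w_{n+1} + 1 < w_{n+2}$. This holds for all $n \geq 0$, since consecutive differences of $w_n$ are $1, 2, 2, 4, 4, \ldots$ and only the first equals $1$.
\item $a < b$ is equivalent to $w_{n+1} - w_n > 1$. This holds for $n \geq 1$ but fails at $n = 0$, where $a = b = 1$.
\end{itemize}

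For $n = 0$ I would argue that the proof of Lemma \ref{size_of_E} only uses $0 \leq 2a - b \leq a$, so that the bitten-off triangle has nonnegative side length fitting inside the $a \times a$ square. That condition holds when $a = b = 1$. Alternatively, one can check directly that $Oct_0 = E_{1,1} = \{0, \pm 1, \pm i\}$ has $5$ points, and that the formula gives $1 + 4 + 4 - 2\cdot 2\cdot 1 = 5$.
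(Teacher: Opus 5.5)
Your proof is correct and matches the paper. The paper states this result as an immediate corollary of Lemma \ref{size_of_E}, with no written proof. The intended argument is exactly your substitution $a = w_n - 2$, $b = w_{n+1} - 3$, using $w_{n+2} = 2w_n$ to get $2a-b = w_{n+2}-w_{n+1}-1$.

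Your hypothesis check goes further than the paper. At $n=0$ one has $a=b=1$, which violates the requirement $a<b$ in the definition of $E_{a,b}$, and the paper applies the lemma there without comment. You close this gap either by noting that the counting argument only needs $a \le b \le 2a$, or by directly checking $|E_{1,1}| = 5$.
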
 
\noindent \textbf{Examples:} We see $|Oct_0| =5$, $|Oct_1| =21$, $|Oct_2| =57$, and $|Oct_3| = 145$.\\

\begin{lemma}\label{even_points} For $n \geq 1$, the number of points $x +yi \in Oct_n$ where $2 | \gcd(x,y)$ is 
$$1 + 2(w_n -2) + (w_n -2)^2 - 2(w_n - w_{n-1})(w_n - w_{n-1} +1).$$
\end{lemma}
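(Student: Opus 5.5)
The plan is to identify the even lattice points of $Oct_n$ with a rescaled copy of an $E_{a',b'}$, and then apply Lemma \ref{size_of_E} directly. Write a point with $2 \mid \gcd(x,y)$ as $x + yi = 2(x' + y' i)$. The conditions $|x|,|y| \le w_n - 2$ and $|x|+|y| \le w_{n+1}-3$ become
$$|x'|,|y'| \le \left\lfloor \tfrac{w_n-2}{2}\right\rfloor, \qquad |x'|+|y'| \le \left\lfloor \tfrac{w_{n+1}-3}{2}\right\rfloor .$$
So the points we want are in bijection with $E_{a',b'}$ for these two floor values.

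Next I will simplify the floors using the structure of $w_n$.
\begin{itemize}
\item For $n \ge 1$, $w_n$ is even, since $w_1 = 4$ and $w_{2k}=3\cdot 2^k$ with $k\ge 1$. Likewise $w_{n+1}$ is even.
\item Hence $a' = \tfrac{w_n}{2} - 1$ and $b' = \tfrac{w_{n+1}}{2} - 2$.
\item From the definition, $w_{n+1}/2 = w_{n-1}$ for $n \ge 1$. For example, $w_{2k+1}/2 = 2\cdot 2^k = w_{2k-1}$, with the convention for $k=0$ checked directly as $w_1/2 = 2$. I should double-check this against the listed sequence $3,4,6,8,12,\dots$: halving $w_{n+1}$ gives the term two places earlier, which is $w_{n-1}$. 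So $b' = w_{n-1} - 2$.
\end{itemize}

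Before substituting, I must check the hypotheses of Lemma \ref{size_of_E}.
\begin{itemize}
\item $a' \ge 1$ holds because $w_n \ge 4$.
\item $b' < 2a'$ is equivalent to $w_{n-1} < w_n$, which is true.
\item $a' < b'$ is equivalent to $w_{n+1} - w_n > 2$. This fails at $n=1$, where $a'=b'=1$.
\end{itemize}
The main (minor) obstacle is this edge case. I will handle it by observing that the counting argument in the proof of Lemma \ref{size_of_E} works verbatim whenever $a \le b \le 2a$: the removed corner is still a triangle of side $2a-b$. Alternatively, one can check $n=1$ by hand: $E_{1,1}$ has $5$ points, and the formula gives $1+4+4-2\cdot 1\cdot 2=5$.

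Finally I substitute into Lemma \ref{size_of_E}. We have $2a' = w_n - 2$, so $4a'^2 + 4a' = (w_n-2)^2 + 2(w_n-2)$. Also $2a' - b' = (w_n - 2) - (w_{n-1} - 2) = w_n - w_{n-1}$. This yields exactly
$$1 + 2(w_n-2) + (w_n-2)^2 - 2(w_n - w_{n-1})(w_n - w_{n-1}+1),$$
as claimed.
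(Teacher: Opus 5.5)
Your proof is correct and is essentially the paper's argument: halve the even points to get $E_{w_n/2-1,\,w_{n-1}-2}$ (the paper writes this as $E_{w_{n-2}-1,\,w_{n-1}-2}$) and apply Lemma \ref{size_of_E}; your remark that the lemma's count holds whenever $a \le b \le 2a$ covers a hypothesis check the paper skips. One correction: the degenerate case $a'=b'$ occurs at $n=2$ as well as $n=1$, since $w_3-w_2=2$ gives $a'=b'=2$. So it is your general $a \le b \le 2a$ observation, not the hand check of $n=1$ alone, that closes the gap.
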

\begin{proof}
The number of points in $Oct_n$ with even real and imaginary parts is 
\begin{align*}\sum_{\tiny
|2l| \leq w_n -2}
\hspace{-.25 cm}
\sum_{\tiny
\begin{array}{c}
|2j| \leq w_n -2\\
|2l| + |2j| \leq w_{n+1} -3
\end{array}} 1.&
\intertext{Since $2j$, $2l$, and $w_{n+1}$ are all even, 
$|2l| + |2j| \leq w_{n=1} -3$ implies $|2l| + |2j| \leq w_{n+1} -4$.
We can therefore rewrite the sum as }
\sum_{\tiny
|2l| \leq w_n -2}
\sum_{\tiny
\begin{array}{c}
|2j| \leq w_n -2\\
|2l| + |2j| \leq w_{n+1} -4
\end{array}
} 1 =
&\sum_{\tiny |l| \leq w_{n-2} -1}
\sum_{\tiny
\begin{array}{c}
|j| \leq w_{n-2} -1\\
|l| + |j| \leq w_{n-1} -2
\end{array}
} 1 =
E_{w_{n-2} -1, w_{n-1} -2}
.
\end{align*}
We apply Lemma \ref{size_of_E} and simplify, using $w_n = 2 w_{n-2}$, to prove the claim.
\end{proof} 

\begin{lemma}\label{size_S}
For $n\geq 0$, 
\[|S_n| =2(w_n -2) + 3(w_n -2)^2 - 6(w_n - w_{n-1})(w_n - w_{n-1} -1).\]
\end{lemma}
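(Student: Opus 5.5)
The plan is to compute $|S_n|$ as a difference of two counts we already have. By definition, $S_n$ consists of the points of $Oct_n$ whose coordinates are not both even. So for $n \geq 1$,
$$|S_n| = |Oct_n| - \#\{x+yi \in Oct_n : 2 \mid \gcd(x,y)\}.$$
Corollary \ref{oct_coro} gives the first term and Lemma \ref{even_points} gives the second. What remains is to write both expressions in the same variables and subtract. I will use $A := w_n - 2$ and $d := w_n - w_{n-1}$.

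The key step is rewriting the ``triangle'' term of Corollary \ref{oct_coro} in terms of $d$. I will use two identities that follow from the definition of $w_n$ by checking the parities $n=2k$ and $n=2k+1$ separately.
\begin{enumerate}
\item[(i)] $w_{m+2} = 2w_m$ for all $m \geq 0$. Consequently $w_{n+2} - w_{n+1} = 2(w_n - w_{n-1}) = 2d$.
\item[(ii)] $2w_n - w_{n+1} = w_{n+2} - w_{n+1}$. For $n=2k$ both sides equal $2\cdot 2^k$, and for $n=2k+1$ both sides equal $2\cdot 2^{k+1}$.
\end{enumerate}
Identity (ii) explains where Corollary \ref{oct_coro} comes from. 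It is Lemma \ref{size_of_E} with $a = w_n-2$ and $b = w_{n+1}-3$, for which $2a-b+1 = 2w_n - w_{n+1}$. With both identities, the triangle term becomes
$$2(2d)(2d-1) = 8d^2 - 4d,$$
so that $|Oct_n| = 1 + 4A + 4A^2 - 8d^2 + 4d$. Lemma \ref{even_points} already reads $1 + 2A + A^2 - 2d^2 - 2d$.

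Subtracting, the constant $1$'s cancel. The $A$-terms give $2A + 3A^2$, and the $d$-terms give
$$-(8d^2 - 4d) + (2d^2 + 2d) = -6d^2 + 6d = -6d(d-1).$$
This is exactly the claimed $2(w_n-2) + 3(w_n-2)^2 - 6(w_n-w_{n-1})(w_n-w_{n-1}-1)$.

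The main obstacle is the boundary case $n = 0$. Lemma \ref{even_points} is stated only for $n \geq 1$, and $w_{-1}$ is not defined, so this case must be handled directly. Here $Oct_0 = \{0, \pm 1, \pm i\}$, and the only point with both coordinates even is $0$, giving $|S_0| = 4$. The natural convention $w_{-1} = 4\cdot 2^{-1} = 2$ gives $d = 1$, and then the formula evaluates to $2 + 3 = 5$. That does not match $4$. So I would first check whether $0$ is meant to lie in $S_0$. If not, I would state the formula for $n \geq 1$ and record $|S_0| = 4$ separately, which is the value Corollary \ref{coro:adding_up} needs.

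A secondary sanity check is to compare with the examples $|Oct_1| = 21$, $|Oct_2| = 57$ and $|Oct_3| = 145$. This confirms the sign and offset conventions in identity (ii), which is the one place where an off-by-one is likely.
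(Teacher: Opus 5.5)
Your argument is correct and is the same as the paper's. Both subtract the even-point count of Lemma~\ref{even_points} from $|Oct_n|$ in Corollary~\ref{oct_coro} and simplify with $w_{m+2}=2w_m$. Your bookkeeping in $A$ and $d$ keeps the signs straight: the paper's displayed computation slips and ends at $+6(w_n-w_{n-1})(w_n-w_{n-1}-1)$, whereas the stated $-6(\cdots)$ is correct, as $|S_1|=16$ and $|S_2|=44$ confirm.

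Your treatment of $n=0$ is also right and need not be hedged. Since $\gcd(0,0)=0$ is even, $0\notin S_0$ and $|S_0|=4$, while the formula gives $5$ with $w_{-1}=2$, and no integer $w_{-1}$ works because $6d(d-1)\neq 1$. So the lemma holds only for $n\geq 1$, which is how the paper uses it when it computes $|S_0|$ directly. The only blemish is that in (ii), for $n=2k+1$, both sides equal $2\cdot 2^{k}$, not $2\cdot 2^{k+1}$.
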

\begin{proof}
The elements of $S_n$ are the points in $x +yi \in Oct_n$ such that $2 \nmid \gcd(x,y)$, so by 
Corollay \ref{oct_coro}, Lemma \ref{even_points}, and the identity $w_{n+2} = 2 w_n$,
\begin{align*}
|S_n|  = & [ 1 +4(w_n -2) + (4(w_n -2)^2 + 2(w_{n+2} - w_{n+1})(w_{n+2} - w_{n+1} -1)]\\
& - [1 + 2(w_n -2) + (w_n -2)^2 - 2(w_n - w_{n-1})(w_n - w_{n-1} +1)]\\
=& 2(w_n -2) + 3 (w_n -2)^2 + \\
& 2(w_{n+2} - w_{n+1})(w_{n+2} - w_{n+1} -1) - (w_{n+2} - w_{n+1})(w_n - w_{n-1} +1)\\
= & 2(w_n -2) + 3 (w_n -2)^2 + \\
&(w_{n+2} - w_{n+1}) [2(w_{n+2} - w_{n+1} -1) - (w_n - w_{n-1} +1)]\\
= & 2(w_n -2) + 3 (w_n -2)^2 + \\
&(w_{n+2} - w_{n+1}) [4w_{n} - 4w_{n-1} -2 - w_n + w_{n-1} -1]\\
= & 2(w_n -2) + 3 (w_n -2)^2 +
3(w_{n+2} - w_{n+1}) (w_{n} - w_{n-1} -1)\\
= & 2(w_n -2) + 3 (w_n -2)^2 +
6(w_{n} - w_{n-1}) (w_{n} - w_{n-1} -1). \qedhere
\end{align*} 
\end{proof}
\noindent
\textbf{Examples:} Direct computation shows $|S_0| =4$.  After that, the lemma gives us the sequence
$|S_1| = 16$, $|S_2| =44$, and $|S_3| = 108$.  

\section{Summing Up the Main Results}
These sums are suprisingly easy, in that they only requiring knowing how to add up geometric series.

\begin{theorem} For $k\geq 1$, $|\phi_{\Z[i]}^{-1} ([0,2k])| = 25 + 8k - 48\cdot 2^k + 28 \cdot 4^k.$
\end{theorem}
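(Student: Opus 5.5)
The plan is to apply Corollary \ref{coro:adding_up} with $n = 2k$ and substitute the closed form of Lemma \ref{size_S}. Since $n - 2j = 2k - 2j$ runs over the even indices, reindexing with $m = k - j$ gives
$$|\phi_{\Z[i]}^{-1}([0,2k])| = 1 + \sum_{m=0}^{k} |S_{2m}| = 1 + |S_0| + \sum_{m=1}^{k} |S_{2m}|.$$
I will split off the $m=0$ term. Lemma \ref{size_S} involves $w_{n-1}$, which is undefined at $n=0$, so I use the value $|S_0| = 4$ obtained by direct computation in the examples after that lemma. This separation is why the theorem is stated for $k \geq 1$.

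Next I specialize Lemma \ref{size_S} to even indices $n = 2m$ with $m \geq 1$. Here $w_{2m} = 3\cdot 2^m$ and $w_{2m-1} = 4\cdot 2^{m-1} = 2\cdot 2^m$, so $w_{2m} - w_{2m-1} = 2^m$ and $w_{2m} - 2 = 3\cdot 2^m - 2$. Substituting these and expanding should give a closed form of the shape
$$|S_{2m}| = 21\cdot 4^m - 24\cdot 2^m + 8 .$$
As a sanity check, $m=1$ gives $84 - 48 + 8 = 44 = |S_2|$, which matches the examples.

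The remaining step is to sum geometric series. I expect
$$\sum_{m=1}^k 4^m = \frac{4^{k+1}-4}{3}, \qquad \sum_{m=1}^k 2^m = 2^{k+1} - 2, \qquad \sum_{m=1}^k 8 = 8k .$$
These give $\sum_{m=1}^k |S_{2m}| = 28\cdot 4^k - 28 - 48\cdot 2^k + 48 + 8k$. Adding $1 + |S_0| = 5$ then yields $25 + 8k - 48\cdot 2^k + 28\cdot 4^k$, as claimed. For a final check I will compare with $k=1$: $1 + 4 + 44 = 49 = 25 + 8 - 96 + 112$.

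The main obstacle is not conceptual but one of bookkeeping. The base term $S_0$ must be handled outside Lemma \ref{size_S}, and the sign of the last term in Lemma \ref{size_S} must be read correctly. The statement has $-6(\cdots)$, while the final line of its proof shows $+6(\cdots)$. Only the minus sign reproduces $|S_2| = 44$, so I will use the minus sign and verify it against the tabulated examples before summing.
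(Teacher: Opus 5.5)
Your proposal is correct and is essentially the paper's proof: split off $|S_0|=4$, specialize Lemma \ref{size_S} with the minus sign to get $|S_{2j}| = 21\cdot 4^j - 24\cdot 2^j + 8$, and sum the geometric series. The minus sign is the right one, and it is also the one the paper uses here. One small aside: the closed form also gives $5 = |\phi_{\Z[i]}^{-1}([0,0])|$ at $k=0$ (empty sum), so splitting off $S_0$ does not by itself force the hypothesis $k \geq 1$.
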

\begin{proof} We plug $|S_0| = 4$, $w_n = w_{2k} = 3 \cdot 2^k$, and $w_{n-1} = w_{2k-1} = 2 \cdot 2^k$ into the expressions from 
Corollary \ref{coro:adding_up} and Lemma \ref{size_S} to see 
\begin{align*}
| \phi_{\Z[i]}^{-1}([0,2k])| &=  1 + |S_0| + \sum_{j=1}^k |S_{2j}|\\
&= 1 + 4 + \sum_{j=1}^k \left [ 2(3 \cdot 2^j -2) + 3(3 \cdot 2^j -2)^2 - 6(2^j)(2^j -1) \right ]\\
& = 5 + \sum_{j=1}^k \left [ 6 \cdot 2^j -4 + 27 \cdot 4^j - 36 \cdot 2^j +12 - 6 \cdot 4^j + 6 \cdot 2^j \right ]\\
&= 5 + 8k - 24(2^{k+1} -2) + 21 \left ( \frac{4^{k+1} -4}{3} \right )\\
&=25 + 8k - 48 \cdot 2^k + 28 \cdot 4^k. \qedhere
\end{align*}
\end{proof}

\begin{theorem} For $k \geq 1$, $|\phi_{\Z[i]}^{-1} ([0, 2k +1])| = 29 + 8k - 68 \cdot 2^k + 28 \cdot 4^k.$
\end{theorem}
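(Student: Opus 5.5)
The plan mirrors the proof of the even case. By Corollary \ref{coro:adding_up} with $n = 2k+1$, the indices $n-2j$ for $0 \le j \le k$ run over the odd numbers $1, 3, \ldots, 2k+1$. So
\[
|\phi_{\Z[i]}^{-1}([0,2k+1])| = 1 + \sum_{j=0}^{k} |S_{2j+1}|.
\]
Unlike the even case, no term needs separate treatment: the smallest index is $1$, and Lemma \ref{size_S} covers it directly (it gives $|S_1| = 16$).

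First I would specialise Lemma \ref{size_S} to odd indices. For $n = 2j+1$ we have $w_n = 4\cdot 2^j$ and $w_{n-1} = w_{2j} = 3\cdot 2^j$, so $w_n - w_{n-1} = 2^j$. Substituting,
\[
|S_{2j+1}| = 2(4\cdot 2^j - 2) + 3(4\cdot 2^j-2)^2 - 6\cdot 2^j(2^j-1).
\]
Expanding and collecting powers gives $|S_{2j+1}| = 42\cdot 4^j - 34\cdot 2^j + 8$. As a sanity check, $j=0$ gives $16 = |S_1|$ and $j=1$ gives $108 = |S_3|$, matching the examples after Lemma \ref{size_S}.

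Next I would sum the three pieces over $0\le j\le k$ as geometric series:
\[
\sum_{j=0}^k 4^j = \frac{4^{k+1}-1}{3}, \qquad \sum_{j=0}^k 2^j = 2^{k+1}-1, \qquad \sum_{j=0}^k 8 = 8(k+1).
\]
Adding the leading $1$, the constant and linear parts come out as $1 - 14 + 34 + 8 + 8k = 29 + 8k$, and the $2^k$ part as $-34\cdot 2^{k+1} = -68\cdot 2^k$. These match the statement exactly.

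The main obstacle is the $4^k$ coefficient, which is where I expect a discrepancy with the formula as printed. The computation gives $42\cdot\frac{4^{k+1}}{3} = 56\cdot 4^k$, not $28\cdot 4^k$. I would resolve this with a small case before anything else. At $k=1$, $\phi_{\Z[i]}^{-1}([0,3]) = \{0\}\sqcup S_3 \sqcup 2S_1$ has $1+108+16 = 125$ points. The expression $29 + 8k - 68\cdot 2^k + 56\cdot 4^k$ gives $125$ at $k=1$, whereas the printed $28\cdot 4^k$ gives $13$. The same value comes from Israel's formula: $a(2k+2) = 14\cdot 4^{k+1} - 34\cdot 2^{k+1} + 8(k+1) + 21 = 56\cdot 4^k - 68\cdot 2^k + 8k + 29$.

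So this plan proves the statement in the form $29 + 8k - 68\cdot 2^k + 28\cdot 2\cdot 4^k$. The coefficient $28$ in the statement should be read as $28\cdot 4^{k}$ doubled, i.e.\ $14\cdot 4^{k+1}$, consistent with Israel's formula. Exactly as printed, the identity fails already at $k=1$, so no proof of it can succeed.
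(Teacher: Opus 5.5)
Your argument is correct and follows the paper's route exactly: Corollary~\ref{coro:adding_up} over the odd indices, then Lemma~\ref{size_S} with $w_{2j+1}-w_{2j}=2^j$, giving $|S_{2j+1}|=42\cdot 4^j-34\cdot 2^j+8$, then three geometric sums. Your diagnosis of the statement is also right: the paper's own proof reaches $14\cdot 4^{k+1}$ and then mis-simplifies it to $28\cdot 4^k$ (it also has harmless slips such as $(8k+1)$ for $8(k+1)$), so the true identity is $29+8k-68\cdot 2^k+56\cdot 4^k$, which matches Israel's $a(2k+2)$ and the count $125$ at $k=1$, while the printed version fails there.
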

\begin{proof}
Using the identities $w_n = w_{2k +1} = 4 \cdot 2^k$ and $w_{n-1} = w_{2k} = 3 \cdot 2^k$, as well as Corollary \ref{coro:adding_up} and Lemma \ref{size_S}, shows
\begin{align*}
|\phi_{\Z[i]}^{-1}([0, 2k+1])| & =1 + \sum_{j=0}^k |S_{2j +1}|\\
&= 1 + \sum_{j=0}^k 
[ 2(4 \cdot 2^j) + 3(4 \cdot 2^j -2)^2 - 6(2^j)(2^j -2)]\\
&= 1 + \sum_{j=0}^k
[8 \cdot 2^j -4 + 48 \cdot 4^j - 48 \cdot 2^j +12 - 6 \cdot 4^j + 6 \cdot 2^j ]\\
&= 1 + \sum_{j=0}^k[ 8 - 34 \cdot 2^j + 42 \cdot 4^j]\\
&= 1 + (8k+1) - 34 (2^{k+1} -1) + 42 \left ( \frac{4^{k+1} -1}{3} \right )\\
&=9 +8k - 68 \cdot 2^k + 34 + 14 \cdot 4^{k+1} -14\\
&= 29 + 8k - 68 \cdot 2^k + 28 \cdot 4^k.
\qedhere
\end{align*}
\end{proof}

Note that, according to Robert Israel's formula,
$|\phi_{\Z[i]}^{-1} ([0, 2k +1]) | = a(2k +2) = a(2(k+1))$, so our formulas match.

\section{Acknowledgements}
I would like to thank my colleague Tad White, who pointed out the OEIS entry A006457 and its references.

\end{document}